\documentclass[a4paper,12pt,twoside]{article}

%-----------------------------------------------------------------------------------------------
%  Required Packages %
%-----------------------------------------------------------------------------------------------

\usepackage{amsmath,amsthm,amscd,amsfonts,amssymb,amsxtra}
\usepackage[hmargin=1.2in,vmargin=1.2in]{geometry}
\usepackage[utf8]{inputenc}
\usepackage{graphicx}
\usepackage{microtype}
\usepackage[pdftex,bookmarks,colorlinks=false]{hyperref}
\usepackage{booktabs,enumitem,caption,subcaption}
\usepackage[mathscr]{euscript}
\usepackage[auth-sc-lg,affil-it]{authblk}
\setcounter{Maxaffil}{5}

%\usepackage{setspace}
%\setstretch{1.1}

\allowdisplaybreaks

%-----------------------------------------------------------------------------------------------
%  Tikz Commands %
%-----------------------------------------------------------------------------------------------

\usepackage{tikz,calc}
\usepackage{caption,subcaption}
\usetikzlibrary{arrows,backgrounds,automata}
\usepackage{tkz-tab}
\usetikzlibrary{decorations.markings}
\tikzstyle{vertex}=[circle, draw, inner sep=2pt, minimum size=6pt]

%-----------------------------------------------------------------------------------------------
%  New Commands %
%-----------------------------------------------------------------------------------------------

\newcommand{\N}{\mathbb{N}}
\newcommand{\noi}{\noindent}

\newcommand{\C}{\mathcal{C}}
%\renewcommand\qedsymbol{$\diamondsuit$}
%\renewcommand\qedsymbol{$\clubsuit$}
%\renewcommand\qedsymbol{$\blacksquare$}

%-----------------------------------------------------------------------------------------------
%  New Theorem Environments %
%-----------------------------------------------------------------------------------------------

\newtheorem{theorem}{Theorem}[section]
\newtheorem{lemma}[theorem]{Lemma}
\newtheorem{corollary}[theorem]{Corollary}

\newtheorem{proposition}[theorem]{Proposition}

\newtheorem{problem}{Problem}

%-----------------------------------------------------------------------------------------------
%  Headers and Footers %
%-----------------------------------------------------------------------------------------------

\pagestyle{myheadings}
\thispagestyle{empty}

\markboth {\hspace*{-9mm} \centerline{\footnotesize 
		% Put here the left page top label
		Reflection on rainbow neighbourhood numbers of graphs}
}
{ \centerline {\footnotesize 
		%put here the author's name
		Johan Kok, Sudev Naduvath, Orville Buelban} \hspace*{-9mm}}

%-----------------------------------------------------------------------------------------------
%  Titles and Author Names %
%-----------------------------------------------------------------------------------------------

\title{\sc Reflection on Rainbow Neighbourhood Numbers}

\author{Johan Kok$^\ast$, Sudev Naduvath$^\dagger$}
\affil{\small Centre for Studies in Discrete Mathematics\\ Vidya Academy of Science \& Technology \\ Thrissur - 680501, Kerala, India.\\ $^\ast${\tt kokkiek2@tshwane.gov.za} \\ $^\dagger${\tt sudevnk@gmail.com}}

\author{Orville Buelban}
\affil{\small Department of Mathematics\\ Ateneo de Manila University \\ Quezon City, Philippines.\\ {\tt gobet\_15@yahoo.com}}

\date{}

%-----------------------------------------------------------------------------------------------
%-----------------------------------------------------------------------------------------------
%  Content %
%-----------------------------------------------------------------------------------------------
%-----------------------------------------------------------------------------------------------

\begin{document}
\maketitle

\begin{abstract}
\noi A rainbow neighbourhood of a graph $G$ with respect to a proper colouring $\C$ of $G$ is the closed neighbourhood $N[v]$ of a vertex $v$ in $G$ such that $N[v]$ consists of vertices from all colour classes in $G$ with respect to $\C$. The number of vertices in $G$ which yield a rainbow neighbourhood of $G$ is called its rainbow neighbourhood number. In this paper, we show that all results known so far about the rainbow neighbourhood number of a graph $G$ implicitly refer to a minimum number of vertices which yield rainbow neighbourhoods in respect of the minimum proper colouring where the colours are allocated in accordance with the rainbow neighbourhood convention. Relaxing the aforesaid convention allows for determining a maximum rainbow neighbourhood number of a graph $G$. We also establish the fact that the minimum and maximum rainbow neighbourhood numbers are respectively, unique and therefore a constant for a given graph.
\end{abstract}
\noi\textbf{Keywords:}  rainbow neighbourhood, rainbow neighbourhood number.
\vspace{0.25cm}%

\noi\textbf{Mathematics Subject Classification 2010:} 05C15, 05C38, 05C75, 05C85. 

\section{Introduction}

For general notation and concepts in graphs and digraphs see \cite{BM,FH,DBW}. Unless mentioned otherwise all graphs $G$ are simple, connected and finite graphs.

A set of distinct colours $\mathcal{C}= \{c_1,c_2,c_3,\dots,c_\ell\}$ is said to be a \textit{proper vertex colouring} of a graph $G$, denoted $c:V(G) \mapsto \mathcal{C}$, is an assignment of colours to the vertices of $G$ such that no two adjacent vertices have the same colour. The cardinality of a minimum proper colouring of $G$ is called the \textit{chromatic number} of $G$, denoted by $\chi(G)$. We call such a colouring a \textit{$\chi$-colouring} or a {\it chromatic colouring} of $G$. 

When a vertex colouring is considered with colours of minimum subscripts, the colouring is called a {\it minimum parameter colouring}. Unless stated otherwise, we consider minimum parameter colour sets throughout this paper. The colour class of $G$ with respect to a colour $c_i$ is the set of all vertices of $G$ having the colour $c_i$ and the cardinality of this colour class is denoted by $\theta(c_i)$. 

In this paper, while $\chi$-colouring the vertices of a graph $G$, we follow the convention that we colour maximum possible number of vertices of $G$ with $c_1$, then colour the maximum possible number of remaining uncoloured vertices with colour $c_2$, and proceeding like until the last colour $c_{\chi(G)}$ is also assigned to some vertices. This convention is called \textit{rainbow neighbourhood convention} (see \cite{KSJ,KS1}). Such colouring is called a $\chi^-$-colouring. 

For the main part of this paper the notation remains as found in the literature \cite{KSJ,KS1,KS2,SKSK,SSKK}. Later we introduce an appropriate change in subsection 2.1. 

Recall that the closed neighbourhood $N[v]$ of a vertex $v \in V(G)$ which contains at least one coloured vertex from each colour class of $G$ with respect to the chromatic colouring, is called a \textit{rainbow neighbourhood} of $G$. We say that vertex $v$ yields a rainbow neighbourhood.  
The number of rainbow neighbourhoods in $G$ (the number of vertices which yields rainbow neighbourhoods) is  call the \textit{rainbow neighbourhood number} of $G$, denoted by $r_\chi(G)$. 

We recall the following important results on the rainbow neighbourhood number for certain graphs provided in \cite{KSJ}. 

\begin{theorem}\label{Thm-1.1}
{\rm \cite{KSJ}} For any graph $G$ of order $n$, we have  $\chi(G) \leq r_\chi(G) \leq n$.
\end{theorem}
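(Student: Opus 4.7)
The plan is to treat the two inequalities separately. The upper bound $r_\chi(G)\le n$ is essentially tautological: each vertex either does or does not yield a rainbow neighbourhood, and $G$ has only $n$ vertices.

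For the lower bound $\chi(G)\le r_\chi(G)$, my strategy is to exhibit at least one rainbow-yielding vertex inside each of the $\chi(G)$ colour classes $V_1,\dots,V_\chi$ of the $\chi^-$-colouring furnished by the rainbow neighbourhood convention. Since such vertices necessarily sit in pairwise distinct classes, this produces $\chi(G)$ distinct rainbow neighbourhoods. The preparatory observation I would use throughout is a greedy property of the convention: whenever $v\in V_i$, the vertex $v$ must have a neighbour in each $V_j$ with $j<i$, for otherwise $V_j\cup\{v\}$ would be a strictly larger independent set in the graph on which $V_j$ was chosen to be maximum. This observation alone already handles the top class $V_\chi$, since any $v\in V_\chi$ automatically sees every smaller colour in its closed neighbourhood and so yields rainbow.

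The main obstacle is the intermediate range $1\le i<\chi$, where the greedy property controls only the smaller colours and says nothing about whether $v\in V_i$ reaches $V_{i+1},\dots,V_\chi$. Here I plan a contradiction argument by simultaneous recolouring. Assume no vertex of $V_i$ yields a rainbow neighbourhood, so that each $v\in V_i$ has no neighbour in some $V_{j(v)}$ with $j(v)>i$. Partition $V_i=\bigsqcup_{j>i}V_i^{(j)}$ according to any such choice of $j(v)$. Each union $V_j\cup V_i^{(j)}$ is then independent, since $V_j$ is independent, $V_i^{(j)}\subseteq V_i$ is independent, and there are no edges between $V_i^{(j)}$ and $V_j$ by construction. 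Reassigning colour $c_j$ to $V_i^{(j)}$ for every $j>i$ simultaneously therefore produces a proper colouring of $G$ that leaves $c_i$ unused, contradicting $\chi(G)=\chi$. The delicate point to verify is precisely that this joint reassignment stays globally proper when all the substitutions are performed at once, which follows because vertices pushed into different new classes still carry different colours and the unaffected classes $V_k$ with $k<i$ remain independent. Once the contradiction is reached, each $V_i$ must contain a vertex $v$ adjacent to every $V_j$ with $j>i$, and combined with the greedy property this forces $N[v]$ to meet all colour classes, completing the argument.
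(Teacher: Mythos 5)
Your proof is correct, and note that the paper offers no proof of Theorem~\ref{Thm-1.1} to compare against: the result is simply recalled from \cite{KSJ}. Your argument (the trivial upper bound, the greedy property of the $\chi^-$-colouring handling colours below $i$, and the simultaneous-recolouring contradiction showing each colour class $V_i$ contains a vertex adjacent to every other class, the key point being that the new classes $V_j \cup V_i^{(j)}$ are independent and pairwise colour-distinct so the merged colouring stays proper while eliminating $c_i$) is the standard one and is sound.
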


\begin{theorem}\label{Thm-1.2}
{\rm \cite{KSJ}} For any bipartite graph $G$ of order $n$, $r_\chi(G)=n$. 
\end{theorem}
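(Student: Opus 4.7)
The plan is to exploit the extreme rigidity of proper $2$-colourings of a connected bipartite graph. Since the paper assumes all graphs are simple, connected and finite, I will treat the trivial case $G=K_1$ separately (where $\chi(G)=1$, the unique vertex yields a rainbow neighbourhood, and $r_\chi(G)=1=n$) and focus on $n\ge 2$, so that $G$ has at least one edge and hence $\chi(G)=2$.

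First I would fix a $\chi^-$-colouring $c\colon V(G)\to\{c_1,c_2\}$. The crucial observation is structural rather than computational: because the colouring is proper, every neighbour of a vertex coloured $c_i$ must carry the complementary colour $c_{3-i}$. I would state this explicitly and use it to conclude that for every $v\in V(G)$ the closed neighbourhood $N[v]$ automatically contains a vertex of colour $c(v)$, namely $v$ itself, together with vertices of the opposite colour, provided $v$ has at least one neighbour.

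The connectedness hypothesis then closes the argument: for $n\ge 2$, no vertex of $G$ is isolated, so every $v$ has $\deg(v)\ge 1$, and hence $N[v]$ intersects both colour classes. By definition $v$ yields a rainbow neighbourhood, and since this holds for every vertex of $G$ we obtain $r_\chi(G)=n$, matching the upper bound of Theorem~\ref{Thm-1.1}.

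I do not anticipate a genuine obstacle here; the statement is essentially a direct consequence of the fact that bipartite graphs admit only two colour classes and that, in a properly $2$-coloured graph, every edge is bichromatic. The only subtlety worth flagging is the distinction between the chromatic colouring respecting the rainbow neighbourhood convention (which the definition of $r_\chi(G)$ uses) and an arbitrary chromatic colouring: in the bipartite case both lead to the same conclusion, because the argument above goes through for \emph{any} proper $2$-colouring, so the convention plays no role beyond fixing a representative.
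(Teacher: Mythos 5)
Your proof is correct and complete. Note that the paper itself offers no proof of this statement: Theorem~\ref{Thm-1.2} is recalled from the cited reference \cite{KSJ}, so there is no in-paper argument to compare against. Your argument is the standard one for this fact --- every vertex of a connected bipartite graph on $n\ge 2$ vertices has a neighbour, which must receive the complementary colour in any proper $2$-colouring, so every closed neighbourhood meets both colour classes --- and you correctly handle the degenerate case $n=1$ and the (immaterial) role of the rainbow neighbourhood convention, since for a connected bipartite graph the proper $2$-colouring is unique up to swapping the two colours.
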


We observe that if it is possible to permit a chromatic colouring of any graph $G$ of order $n$ such that the star subgraph obtained from vertex $v$ as center and its open neighbourhood $N(v)$ the pendant vertices, has at least one coloured vertex from each colour for all $v \in V(G)$ then $r_\chi(G)=n$. Certainly, to examine this property for any given graph is complex.

\begin{lemma}\label{Lem-1.3}
{\rm \cite{KSJ}} For any graph $G$, the graph $G'= K_1+G$ has $r_\chi(G')=1+r_\chi(G)$.
\end{lemma}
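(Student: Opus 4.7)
The plan is to exploit directly the structure of the join $G' = K_1 + G$. Write $u$ for the apex vertex coming from the $K_1$; since $u$ is adjacent to every vertex of $G$, in any proper colouring of $G'$ the colour of $u$ cannot appear on any other vertex, and hence $\chi(G') = \chi(G) + 1$. My first step is to verify that under the rainbow neighbourhood convention, the apex $u$ is forced into the highest-indexed colour class $c_{\chi(G)+1}$. Indeed, placing $u$ in $c_1$ would leave $c_1$ a singleton, whereas assigning $c_1$ to a maximum independent set $I_1 \subseteq V(G)$ yields $|I_1| \ge 1$ vertices in $c_1$, which is strictly larger whenever $\alpha(G) \ge 2$ and at least as large when $\alpha(G) = 1$. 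Iterating the same comparison colour by colour shows that a $\chi^-$-colouring of $G'$ can always be realised by a $\chi^-$-colouring of $G$ on $V(G)$ using colours $c_1, \ldots, c_{\chi(G)}$, together with $u$ assigned the new colour $c_{\chi(G)+1}$.

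Next, I would count rainbow-yielding vertices of $G'$. Since $N_{G'}[u] = V(G')$ and every colour class of $G'$ is nonempty, $u$ automatically yields a rainbow neighbourhood, contributing $1$. For any $v \in V(G)$, we have the join identity $N_{G'}[v] = N_G[v] \cup \{u\}$, and because $u$ is the unique vertex bearing colour $c_{\chi(G)+1}$, the closed neighbourhood $N_{G'}[v]$ meets every colour class of $G'$ if and only if $N_G[v]$ meets every colour class $c_1, \ldots, c_{\chi(G)}$ of the induced $\chi^-$-colouring on $G$. Hence $v$ yields a rainbow neighbourhood in $G'$ if and only if it yields one in $G$, and summing with the contribution of $u$ gives $r_\chi(G') = 1 + r_\chi(G)$.

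The main delicate point will be the first step, namely verifying that the rainbow convention on $G'$ forces $u$ into the last colour class and simultaneously induces a $\chi^-$-colouring of $G$; once this structural reduction is in hand, the remaining identification of rainbow-yielding vertices is a direct consequence of the join relation $N_{G'}[v] = N_G[v] \cup \{u\}$ and the fact that $u$'s colour appears only at $u$.
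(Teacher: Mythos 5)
The paper states Lemma \ref{Lem-1.3} only as a recalled result from \cite{KSJ} and gives no proof of its own, so there is nothing internal to compare against; your argument is correct and is the natural one. The key observations --- that the apex $u$ is adjacent to all of $V(G)$ and hence is the sole member of its colour class with $\chi(G')=\chi(G)+1$, that $N_{G'}[u]=V(G')$ makes $u$ yield automatically, and that $N_{G'}[v]=N_G[v]\cup\{u\}$ reduces the rainbow condition in $G'$ exactly to the one in $G$ --- are precisely the reasoning underlying the cited source, and your extra care about the convention forcing $u$ into the last colour class is a sensible (if essentially cosmetic, since relabelling colours does not affect which vertices yield) addition.
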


\section{Uniqueness of Rainbow Neighbourhood Number}

Since a $\chi^-$-colouring does not necessarily ensure a unique colour allocation to the vertices, the question arises whether or not the rainbow neighbourhood number is unique (a constant) for any minimum proper colouring with colour allocation in accordance to the rainbow neighbourhood convention. The next theorem answers in the affirmative. 

\begin{theorem}\label{Thm-2.1}
Any graph $G$ with minimum proper colouring as per the rainbow neighbourhood convention has a unique minimum rainbow neighbourhood number $r_\chi(G)$.
\end{theorem}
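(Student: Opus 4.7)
The plan is to show that any two minimum proper colourings $\C_1$ and $\C_2$ of $G$ that both obey the rainbow neighbourhood convention produce the same number of rainbow-yielding vertices, so that $r_\chi(G)$ is a genuine invariant of $G$ rather than of the particular convention-compliant colouring chosen.

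The first step is to verify that the convention rigidly fixes the multiset of colour class sizes $(\theta(c_1),\theta(c_2),\ldots,\theta(c_{\chi(G)}))$. By definition of the convention, $\theta(c_1)$ is the maximum size of a single colour class over all $\chi$-colourings of $G$ (i.e. the largest independent set $S$ such that $G-S$ is $(\chi(G)-1)$-colourable), $\theta(c_2)$ is the maximum size of the next class once $c_1$ has been used maximally, and so on. This lexicographic maximality forces the tuple of class sizes to be unique across all convention-compliant colourings, although the actual parts as subsets of $V(G)$ may well differ between $\C_1$ and $\C_2$.

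The second step is to pass from $\C_1$ to $\C_2$ by an exchange argument. Whenever $\C_1(v)=c_i$ and $\C_2(v)=c_j$ for some vertex $v$, the constancy of the size tuple guarantees a companion vertex $u$ with $\C_1(u)=c_j$ and $\C_2(u)=c_i$. For each such elementary swap I would analyse its effect on the colour composition of $N[w]$ for every $w\in\{u,v\}\cup N(u)\cup N(v)$, and argue that any rainbow neighbourhood destroyed by the swap is matched, bijectively, by one newly created. Iterating transforms $\C_1$ into $\C_2$ while conserving the rainbow count, yielding the claimed uniqueness.

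The main obstacle is precisely the swap-preservation step: an isolated swap need not be locally count-preserving, particularly for vertices $w$ whose closed neighbourhood contains both $u$ and $v$, or when the swap momentarily violates properness or the convention. One may have to package swaps into larger structural moves dictated by the convention (for example, swapping along entire Kempe chains on colours $c_i,c_j$) so that every step is itself a convention-compliant $\chi^-$-colouring and the local changes to rainbow status cancel globally. A safe fallback, should the full exchange argument prove intricate, is to read the theorem in the weaker sense that $r_\chi(G)$ is by definition the minimum of the rainbow count over convention-compliant colourings, in which case uniqueness is immediate as the minimum of a finite nonempty set of nonnegative integers.
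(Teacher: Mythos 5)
Your proposal has a genuine gap at exactly the point you yourself flag: the swap-preservation step. Your first step (uniqueness of the size tuple $(\theta(c_1),\ldots,\theta(c_{\chi(G)}))$ under the lexicographic-maximality reading of the convention) is reasonable, but it does not deliver the ``companion vertex'' you invoke: equality of class sizes between $\C_1$ and $\C_2$ only forces a balanced permutation structure on the recoloured vertices, not pairwise transpositions, and in any case a single transposition of colours between two vertices need not keep the colouring proper, let alone convention-compliant. Since you never prove that each elementary move conserves the number of yielding vertices --- and you explicitly concede that an isolated swap can fail to do so, especially for a third vertex $w$ with both $u$ and $v$ in $N[w]$ --- the chain from $\C_1$ to $\C_2$ is not established and the theorem is not proved. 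The fallback does not rescue this: defining $r_\chi(G)$ as the minimum of the count over convention-compliant colourings makes ``uniqueness'' vacuous but proves a different statement. The content of the theorem (and of the paper's later notation $r^-_\chi(G)$) is that \emph{every} convention-compliant colouring realises one and the same count, and that this common value is the minimum over all minimum proper colourings; a definitional minimum asserts neither.

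For comparison, the paper argues in two stages: first, an interchange argument meant to show a convention-compliant allocation cannot be improved downwards (if exchanging $c(v)=c_\ell$ with $c(u)=c_t$ destroyed the rainbow neighbourhood at $v$ or $u$, properness would already have been violated, so $c_\ell\notin c(N[u])$ and $c_t\notin c(N[v])$ and the exchange can only create yielding vertices); second, an induction on $\chi(G)$ that deletes the colour class $\mathcal{C}_{k+1}$, applies the induction hypothesis to the subgraph $H'$ with $\chi(H')=k$, and re-attaches the deleted vertices one at a time. Your plan is closest in spirit to the first stage, but the lemma you would need --- a properness- and convention-preserving sequence of moves (e.g.\ along Kempe chains) joining $\C_1$ to $\C_2$ under which the set of yielding vertices changes by a count-preserving bijection --- is the entire difficulty, and neither your sketch nor your fallback supplies it.
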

\begin{proof}
First, consider any minimum proper colouring say, $\mathcal{C} = \{\underbrace{c_{x}, c_y, c_w,\ldots,c_z}_{\chi(G)-entries}\}$ and assume the colours are ordered (or labeled) in some context. Now, according to the ordering the rainbow neighbourhood convention, beginning with $c_x$, then $c_y$, then $\cdots, c_z$, the colouring maximises the allocation of same colours and therefore minimises those vertices which can have closed neighbourhoods having at least one of each colour in respect of a $\chi$-colouring. The aforesaid follows because, assume that at least one vertex's colour say, $c(v) = c_\ell$ can be interchanged with vertex $u$'s colour, $c(u) = c_t$ to cause at least one or more yielding vertices, not necessarily distinct from $v$ or $u$, \underline{not} to yield a rainbow neighbourhood any more. However, neither vertex $v$ nor vertex $u$ yielded initially. Else, $N[v]$ had at least one of each colour and will now have $c_t$ adjacent to itself. Similarly, $c_\ell$ will be adjacent to itself in $N[u]$. In both cases we have a contradiction to a proper colouring.

So, $c_\ell \notin c(N[u])$, $c_t \notin c(N[v])$. Hence, the interchange can result in additional vertices yielding rainbow neighbourhoods and, not in a reduction of the number of vertices yielding rainbow neighbourhoods. Similar reasoning for all pairs of vertices which resulted in pairwise colour exchange leads to the same conclusion. This settles the claim that $r_\chi(G)$ is a minimum.

Furthermore, without loss of generality a minimum parameter colouring set may be considered to complete the proof.

If we relax connectedness it follows that the null graph (edgeless graph) on $n \geq 1$ vertices denoted by, $\mathfrak{N}_n$ has, $\chi(\mathfrak{N}_n) = 1$ and $r_\chi(\mathfrak{N})n) = n$ which is unique and therefore, a constant over all $\chi$-colourings. Immediate induction shows it is true $\forall~n \in \N$. From Theorem \ref{Thm-1.2}, it follows that the same result holds for graphs $G$ with $\chi(G)=2$. Hence, the result holds for all graphs with $\chi(G)= 1$ or $\chi(G)=2$. Assume that it holds for all graphs $G$ with $3 \leq \chi(G) \leq k$. Consider any graph $H$ with $\chi(H)=k+1$. It is certain that at least one such graph exists for example, any $G+K_1$ for which $\chi(G)=k$.

Consider the set of vertices $\mathcal{C}_{k+1} =\{v_i \in V(H):c(v_i) = c_{k+1}\}$. Consider the induced subgraph $H' = \langle V(H)-\mathcal{C}_{k+1}\rangle$. Clearly, all vertices $u_i \in V(H')$ which yielded rainbow neighbourhoods in $H$ also yield rainbow neighbourhoods in $H'$ with $\chi(H')=k$. Note that $r_\chi(H')$ is a unique number hence, is a constant. It may also differ from $r_\chi(H)$ in any way, that is, greater or less. It is possible that a vertex which did not yield a rainbow neighbourhood in $H$ could possibly yield such in $H'$.

For any vertex $v_j \in \mathcal{C}_{k+1}$ construct $H'' = H' \diamond v_j$ such that all edges $v_ju_m \in E(H'')$ has $u_m\in V(H')$ and $v_ju_m \in V(H)$. Now $c(v_j) = c_{k+1}$ remains and either, $v_j$ yields a rainbow neighbourhood in $H''$ together with those in $H'$ or it does not. Consider $H''$ and by iteratively constructing $H''', H'''', \cdots, H^{''''\dots '~(|\mathcal{C}|+1)-times},\ \forall~v_i \in \mathcal{C}_{k+1}$ and with reasoning similar to that in the case of $H' \diamond v_j$, the result follows for all graphs $H$ with $\chi(H) = k+1$. Note that any vertex $u_\ell$ which yielded a rainbow neighbourhood in $H'$ and not in $H$, cannot yield same in $H$ following the iterative reconstruction of $H$. Therefore, the result holds for all graphs with $\chi(G)=n,\ n \in \N$.
\end{proof} 

\subsection{Maximum rainbow neighbourhood number $r^+_\chi(G)$ permitted by a minimum proper colouring}

If the allocation of colours is only in accordance with a minimum proper colouring then different numbers of vertices can yield rainbow neighbourhoods in a given graph.

\textbf{Example:} It is known that $r_\chi(C_n) = 3$, $n \geq 3$ if and only if $n$ is odd. Consider the vertex labeling of a cycle to be consecutively and clockwise, $v_1,v_2,v_3,\dots ,v_n$. So for, the cycle $C_7$ with rainbow neighbourhood convention colouring, $c(v_1)=c_1,c(v_2)=c_2,c(v_3)=c_1,c(v_4)=c_2,c(v_5)=c_1,c(v_6)=c_2$ and $c(v_7)=c_3$ it follows that vertices $v_1,v_6,v_7$ yield rainbow neighbourhoods. By recolouring vertex $v_4$ to $c(v_4)=c_3$ a minimum proper colouring is permitted with vertices $v_1,v_3,v_5,v_6,v_7$ yielding rainbow neighbourhoods in $C_7$. It is easy to verify that this recolouring (not unique) provides a maximum number of rainbow neighbourhoods in $C_7$.

A review of results known to the authors shows that thus far, the minimum rainbow neighbourhood number is implicitly defined \cite{KSJ,KS1,KS2,SKSK,SSKK}. It is proposed that henceforth the notation $r^-_\chi(G)$ replaces $r_\chi(G)$ and that $r_\chi(G)$ only refers to the number of rainbow neighbourhoods found for any given minimum proper colouring allocation. Therefore, $r^-_\chi(G)=\min\{r_\chi(G): \text{over all permissible colour allocations}\}$ and $r^+_\chi(G)=\max\{r_\chi(G):\text{over all permissible colour allocations}\}$. For any null graph as well as for any graph $G$ with $\chi(G) = 2$ and for complete graphs, $K_n$ it follows that $r^-(\mathfrak{N}_n) = r^+(\mathfrak{N}_n)$, $r^-(G) = r^+(G)$ and $r^-(K_n) = r^+(K_n)$ . 

\begin{theorem}\label{Thm-2.2}
Any graph $G$ has a minimum proper colouring which permits a unique maximum (therefore, constant) rainbow neighbourhood number, $r^+_\chi(G)$.
\end{theorem}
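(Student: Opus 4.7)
The plan is to deduce the well-definedness and uniqueness of $r^+_\chi(G)$ from a direct finiteness observation, supplemented by an induction that mirrors the scaffold of Theorem~\ref{Thm-2.1}.

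First, I would observe that for any graph $G$ on $n$ vertices with $\chi(G)=k$, the family of minimum proper colourings $c:V(G)\to\{c_1,\ldots,c_k\}$ is contained in the set of all $k^n$ maps from $V(G)$ to the palette, and is therefore finite and non-empty. For each such $c$, the integer $r_\chi(G,c)$ counting the vertices that yield rainbow neighbourhoods lies in $\{0,1,\ldots,n\}$. Since the maximum of a finite non-empty set of integers is automatically unique,
\[
r^+_\chi(G)=\max\{r_\chi(G,c):c\text{ a permissible minimum proper colour allocation of }G\}
\]
is a well-defined constant of $G$.

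To match the inductive flavour of Theorem~\ref{Thm-2.1}, I would next proceed by induction on $\chi(G)$. The cases $\chi(G)\in\{1,2\}$ are immediate: for the null graph, $r^+_\chi(\mathfrak{N}_n)=n$, and for any $G$ with $\chi(G)=2$, Theorem~\ref{Thm-1.2} yields $r^+_\chi(G)=r^-_\chi(G)=n$. Assuming the statement for all graphs with $3\le\chi(G)\le k$, I would take $H$ with $\chi(H)=k+1$, fix any maximising colouring $c$, remove its $(k+1)$-th colour class $\mathcal{C}_{k+1}(c)$ to form $H'=\langle V(H)\setminus\mathcal{C}_{k+1}(c)\rangle$ with $\chi(H')\le k$, apply the inductive hypothesis to $H'$, and then rebuild $H$ by the iterative diamond attachment $H'\diamond v_j$ used in the proof of Theorem~\ref{Thm-2.1}. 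At each step the change in the rainbow count depends only on $c$, so after $|\mathcal{C}_{k+1}(c)|$ attachments the value $r_\chi(H,c)$ is a determined integer; maximising over the finite family of such $c$ pins down the unique $r^+_\chi(H)$.

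The main obstacle, compared with the minimum case, is the absence of any monotone interchange argument. In the proof of Theorem~\ref{Thm-2.1} one could argue that a colour swap away from the rainbow neighbourhood convention can only fail to decrease the count, so the convention realises the minimum; no such one-sided bound holds for the maximum, where swaps can both create and destroy rainbow neighbourhoods and two maximisers may look structurally very different. Consequently the uniqueness is not enforced by any extremal colouring convention but is a formal consequence of maximising a bounded integer-valued function over a finite set. I would therefore rest the argument squarely on the finiteness observation above, using the induction only to expose the constructive content and to keep the presentation uniform with Theorem~\ref{Thm-2.1}, rather than attempt a direct structural comparison between two distinct maximising colourings.
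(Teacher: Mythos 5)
Your proof is correct, but it takes a genuinely different route from the paper, whose entire proof of this theorem is the single sentence ``Similar to the proof of Theorem~\ref{Thm-2.1}.'' That earlier proof rests on an interchange argument tailored to the \emph{minimum}: it argues that swapping two vertices' colours away from the rainbow neighbourhood convention can only create, never destroy, yielding vertices, so the convention realises the minimum; it then runs an induction on $\chi$ via deletion and reattachment of the top colour class. You correctly observe that this one-sided monotonicity has no analogue for the maximum --- a swap can both create and destroy rainbow neighbourhoods, and there is no distinguished colouring convention that is claimed to achieve $r^+_\chi(G)$ --- so importing the Theorem~\ref{Thm-2.1} argument wholesale would not actually establish anything. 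Instead you ground the result where it genuinely lives: $r^+_\chi(G)$ is by definition the maximum of a non-empty finite set of integers in $\{0,1,\dots,n\}$ (there are at most $\chi(G)^n$ candidate colourings), hence exists, is attained by some colouring, and is unique. This is the complete and honest proof of the statement as defined; your additional induction on $\chi(G)$ is sound (note $\chi(H')$ is in fact exactly $k$, since otherwise $H$ would be $k$-colourable) but, as you concede, purely decorative. What your approach buys is rigour and brevity at the cost of exposing that the theorem, as stated, is a triviality; what the paper's approach would buy, if it worked, is a structural description of how maximising colourings arise --- but the key lemma it leans on does not transfer, so your route is the defensible one.
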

\begin{proof}
Similar to the proof of Theorem \ref{Thm-2.1}.
\end{proof}

\begin{proposition}\label{Prop-2.3}
For cycle $C_n$, $n$ is odd and $\ell = 0,1,2,\dots$:
\begin{enumerate}\itemsep0mm
\item[(i)] $r^+_\chi(C_{7+4\ell}) = 3 +2(\ell + 1)$ and,
\item[(ii)] $r^+_\chi(C_{9+4\ell}) = 3 +2(\ell + 1)$.
\end{enumerate}
\end{proposition}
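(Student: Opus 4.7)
The plan is to reduce the question to a short binary-sequence optimisation problem and then split by parity. Write $n=2k+1$. Since $n$ is odd one has $\chi(C_n)=3$ and $\alpha(C_n)=k$; under the convention underpinning $r^+_\chi$ (as exhibited by the recolouring in the $C_7$ example above), $\theta(c_1)=k$, so the $c_1$-coloured vertices form a maximum independent set of $C_n$. By rotational symmetry I may assume these are $v_1,v_3,\dots,v_{2k-1}$, in which case the remaining $k+1$ vertices split into the pairwise non-adjacent $v_2,v_4,\dots,v_{2k-2}$ and the adjacent pair $v_{2k},v_{2k+1}$; the induced subgraph is thus the disjoint union of $k-1$ isolated vertices and a single edge.

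Next I would extend the colouring to the remaining vertices using $\{c_2,c_3\}$, writing $x_i=c(v_i)$. Three observations determine the rainbow count. First, the two endpoints of the edge $v_{2k}v_{2k+1}$ carry opposite colours, and since each also has a $c_1$-coloured cycle-neighbour, both always yield. Second, each isolated remaining $v_{2j}$ has both of its cycle-neighbours in the $c_1$-class and so never yields. Third, a $c_1$-vertex $v_{2j-1}$ yields exactly when $x_{2j-2}\neq x_{2j}$, cyclically reading $x_0$ as $x_{2k+1}$. Assembling the $k$ conditions from the third case into the length-$(k+1)$ sequence
\[
S=(x_{2k+1},x_2,x_4,\ldots,x_{2k-2},x_{2k}),
\]
the total rainbow count becomes $2$ plus the number of positions where consecutive entries of $S$ differ, subject to the constraint $x_{2k+1}\neq x_{2k}$ coming from the edge.

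The main step---and, I expect, the principal obstacle---is then the combinatorial lemma that in a binary sequence of length $m=k+1$ whose first and last entries are required to differ, the maximum number of consecutive unequal pairs equals $m-1$ if $m$ is even and $m-2$ if $m$ is odd. The fully alternating pattern achieves $m-1$ changes, but its endpoints coincide when $m$ is odd, which is forbidden, and at least one ``repeat'' is then needed, giving the bound $m-2$ (attained by a single perturbation of the alternating pattern). This parity mismatch between the length of $S$ and its enforced boundary condition is the entire reason the answer depends on $n \pmod 4$.

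To conclude I would unwind each case. For (i), $n=7+4\ell$ gives $k=3+2\ell$ odd, so $m$ is even and the maximum is $k$; hence $r^+_\chi(C_n)=2+k=3+2(\ell+1)$. For (ii), $n=9+4\ell$ gives $k=4+2\ell$ even, so $m$ is odd and the maximum is $k-1$; hence $r^+_\chi(C_n)=2+(k-1)=3+2(\ell+1)$. Explicit (near-)alternating choices of the $x_i$ realise both bounds, completing the proof.
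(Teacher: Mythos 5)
Your reduction to a binary-sequence optimisation is clean and, granting your structural assumption, every subsequent step checks out: the classification of which vertices can yield, the translation into counting sign changes along $S$, the parity obstruction forcing one repeated pair when $m=k+1$ is odd, and the final case split all produce exactly the stated values. This is also a genuinely different --- and considerably more rigorous --- route than the paper's, which only exhibits an explicit colouring by repeatedly inserting two vertices into an edge of $C_5$, then $C_7$, then $C_9,\dots$ and never argues an upper bound at all; your argument, once the class of admissible colourings is fixed, delivers both the construction and the matching upper bound.

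The genuine gap is your very first step: the assertion that the colourings over which $r^+_\chi$ is maximised satisfy $\theta(c_1)=\alpha(C_n)=k$, so that the $c_1$-class is a maximum independent set. The paper defines $r^+_\chi(G)$ as a maximum over all ``permissible'' minimum proper colourings after \emph{relaxing} the rainbow neighbourhood convention, and its own $C_7$ example uses the colour distribution $(3,2,2)$ rather than the convention's $(3,3,1)$; nothing in that definition pins $\theta(c_1)$ to $k$. This is not cosmetic: if all minimum proper colourings are admitted, then whenever $3\mid n$ the periodic colouring $c_1,c_2,c_3,c_1,c_2,c_3,\dots$ of $C_n$ is proper, uses $\chi(C_n)=3$ colours, and makes every closed neighbourhood rainbow, so the maximum would be $n$ (e.g.\ $9$ for $C_9$ and $15$ for $C_{15}$) rather than $3+2(\ell+1)$. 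Your restriction to colourings with a maximum $c_1$-class is precisely what excludes these colourings and makes the computation land on the stated formula, so you must either derive that restriction from the definition of ``permissible'' or state it as an explicit hypothesis; as written, the load-bearing step is assumed rather than proved. (The rotational-symmetry normalisation itself is fine: every independent set of size $k$ in $C_{2k+1}$ is a rotate of $\{v_1,v_3,\dots,v_{2k-1}\}$.)
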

\begin{proof}
Consider the conventional vertex labeling of a cycle $C_n$ to be consecutively and clockwise, $v_1,v_2,v_3,\dots ,v_n$. It can easily be verified that $C_3$, $C_5$ have $r^-_\chi(C_3) = r^+_\chi(C_3) = r^-_\chi(C_5) = r^+_\chi(C_5) = 3$.  For $C_5$ let $c(v_1) = c_1,c(v_2)=c_2,c(v_3)=c_1, c(v_4)=c_2~and~c(v_5)=c_3$. To obtain $C_7$ and without loss of generality, insert two new vertices $v'_1,v'_2$ clockwise into the edge $v_1v_2$. Colour the new vertices $c(v'_1) = c_2,c(v'_2)=c_3$. Clearly, a minimum proper colouring is permitted in doing such and it is easy to verify that, $r^+_\chi(C_7) = 5$ in that vertices $v'_1,v_2$ yield additional rainbow neighbourhoods. By re-labeling the vertices of $C_7$ conventionally and repeating the exact same procedure to obtain $C_9$, thereafter $C_{11}$, thereafter $C_{13}$, $\cdots$, both parts (i) and (ii) follow through mathematical induction.
\end{proof}

Determining the parameter $r^+_\chi(G)$ seems to be complex and a seemingly insignificant derivative of a graph say $G'$ can result in a significant variance between $r^+_\chi(G)$ and $r^+_\chi(G')$. The next proposition serves as an illustration of this observation. We recall that a sunlet graph $S_n$ on $2n$, $n \geq 3$ is obtained by attaching a pendant vertex to each vertex of cycle $C_n$.

\begin{proposition}\label{Prop-2.4}
For sunlet $S_n$, $n$ is odd and $\ell = 0,1,2,\dots$:
\begin{enumerate}\itemsep0mm 
\item[(i)] $r^+_\chi(S_{7+4\ell}) = n$ and
\item[(ii)] $r^+_\chi(S_{9+4\ell}) = n$.
\end{enumerate}
\end{proposition}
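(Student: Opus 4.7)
The plan is to prove matching upper and lower bounds of $n$ on $r^+_\chi(S_n)$. For the upper bound, since $n$ is odd we have $\chi(C_n)=3$, and attaching pendants keeps the chromatic number at $3$ (each pendant need only differ from its unique cycle neighbour), so $\chi(S_n)=3$. Every pendant $u_i$ has $|N[u_i]|=2$, which cannot meet three distinct colour classes. Hence no pendant can yield a rainbow neighbourhood, giving $r^+_\chi(S_n)\le n$.

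For the lower bound I would exhibit, for each odd $n$ in the two progressions, a minimum proper colouring of $S_n$ in which every cycle vertex yields a rainbow neighbourhood. Fix any proper $3$-colouring of the cycle $C_n$ (which exists because $\chi(C_n)=3$). For each cycle vertex $v_i$, inspect the two cycle neighbours $v_{i-1},v_{i+1}$. If they carry different colours, then $\{v_{i-1},v_i,v_{i+1}\}$ already exhibits all three colours and we colour $u_i$ with any colour $\neq c(v_i)$. If they carry the same colour, then the unique colour $c^\ast$ missing from $\{v_{i-1},v_i,v_{i+1}\}$ automatically differs from $c(v_i)$ (there are only three colours available), so setting $c(u_i)=c^\ast$ is a legal extension and makes $N[v_i]$ rainbow. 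The resulting colouring of $S_n$ is proper, every cycle vertex $v_i$ yields a rainbow neighbourhood, and thus $r_\chi(S_n)\ge n$ under this colouring.

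To align with the inductive template used in the proof of Proposition \ref{Prop-2.3}, I would first verify the base cases $S_7$ and $S_9$ by writing explicit colourings, and then argue inductively: given a colouring of $S_n$ in which all cycle vertices yield, insert two new cycle-pendant pairs into an edge of the cycle, exactly as in the insertion step of Proposition \ref{Prop-2.3}, and re-apply the pendant prescription above on the extended cycle. This produces a valid colouring of $S_{n+2}$ in which all $n+2$ cycle vertices yield, and iterating generates both the $7+4\ell$ and $9+4\ell$ subsequences. Uniqueness of $r^+_\chi(S_n)$ as a constant is guaranteed by Theorem \ref{Thm-2.2}.

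The only subtle point is verifying that the prescribed colour for $u_i$ is always compatible with a proper colouring, which reduces to the observation that when $c(v_{i-1})=c(v_{i+1})$ the missing third colour $c^\ast$ cannot equal $c(v_i)$, a consequence of having only three colours in play. Beyond this case-check and the routine verification that the edge insertion of Proposition \ref{Prop-2.3} extends to sunlets without disturbing the yielding property of previously rainbow cycle vertices, the argument is essentially immediate.
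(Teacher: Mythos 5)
Your proposal is correct and follows essentially the same route as the paper: fix a proper $3$-colouring of the cycle $C_n$ and then choose each pendant's colour to supply whatever colour is missing from its cycle vertex's closed neighbourhood, so that all $n$ cycle vertices yield. Your explicit two-case check for the pendant colour and your degree-based observation that no pendant can yield (needed for the matching upper bound, which the paper leaves implicit) only make the same argument more complete.
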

\begin{proof}
Colour the induced cycle $C_n$ of the sunlet graph $S_n$ similar to that found in the proof of Proposition \ref{Prop-2.3}. Clearly, with respect to the induced cycle, each vertex $v_i$ on $C_n$ has $c(N[v_i])_{v_i \in \langle V(C_n)\rangle}$ either $\{c_1,c_2,c_3\}$ or $\{c_1,c_2\}$. It is trivially true that each corresponding pendant vertex can be coloured either $c_1$, or $c_2$ or $c_3$ to ensure that $c(N[v_i])_{v_i \in V(S_n)} = \{c_1,c_2,c_3\}$. Hence, the $n$ cycle vertices each yields a rainbow neighbourhood.
\end{proof}

There are different definitions of a sun graph in the literature. The \textit{empty-sun graph}, denoted by $S^{\bigodot}_n=C_n\bigodot K_1$, is the graph obtained by attaching an additional vertex $u_i$ with edges $u_iv_i$ and $u_iv_{i+1}$ for each edge $v_iv_{i+1} in E(C_n)$, $1\leq i \leq n-1$ and similarly vertex $u_n$ to edge $v_nv_1$.

\begin{proposition}\label{Prop-2.5}
For an empty-sun graph $S^{\bigodot}_n$, $n \geq 3$, $r^+_\chi(S^{\bigodot}_n) = 2n$.
\end{proposition}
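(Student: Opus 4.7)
The plan is to exploit the fact that every vertex of $S^{\bigodot}_n$ lies in a triangle of the form $\{v_i, v_{i+1}, u_i\}$: under any proper $3$-colouring such a triangle is forced to use all three colours, and this alone will make every vertex yield a rainbow neighbourhood. The bound $r^+_\chi(S^{\bigodot}_n) = 2n$ will then drop out with no further work.

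First I would pin down that $\chi(S^{\bigodot}_n) = 3$. The lower bound $\chi \geq 3$ is immediate from the triangle $\{v_i, v_{i+1}, u_i\}$. For the upper bound I would exhibit an explicit proper $3$-colouring: when $n$ is even one can $2$-colour the cycle alternately with $c_1, c_2$ and set $c(u_i) = c_3$ for every $i$; when $n$ is odd one can take $c(v_j) = c_1$ for odd $j \leq n-2$, $c(v_j) = c_2$ for even $j \leq n-1$, $c(v_n) = c_3$, and then assign to each $u_i$ the unique colour not already used on $\{v_i, v_{i+1}\}$. This confirms that a minimum proper colouring with three colours exists.

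Next, fix any proper $3$-colouring $\mathcal{C} = \{c_1, c_2, c_3\}$ of $S^{\bigodot}_n$. For every $i$ the three vertices of the triangle $\{u_i, v_i, v_{i+1}\}$ carry three distinct colours; since $N[u_i] = \{u_i, v_i, v_{i+1}\}$, the pendant-like vertex $u_i$ yields a rainbow neighbourhood. For a cycle vertex $v_i$, the closed neighbourhood $N[v_i] = \{v_i, v_{i-1}, v_{i+1}, u_{i-1}, u_i\}$ contains both triangles $\{v_{i-1}, v_i, u_{i-1}\}$ and $\{v_i, v_{i+1}, u_i\}$, each of which already exhausts the palette by the previous observation, so $N[v_i]$ is rainbow as well. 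Thus all $2n$ vertices yield rainbow neighbourhoods for \emph{any} proper $3$-colouring, and in particular $r^+_\chi(S^{\bigodot}_n) = 2n$.

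There is no genuine obstacle in this argument: the conclusion is forced by the local triangle structure of $S^{\bigodot}_n$ rather than by any clever global colour assignment, which is precisely what distinguishes the empty-sun graph from the plain sunlet $S_n$ treated in Proposition \ref{Prop-2.4}. The only step requiring a brief verification is the $3$-colourability for odd $n$, and it is a pleasant by-product of the argument that the same reasoning actually yields $r^-_\chi(S^{\bigodot}_n) = 2n$ too, since the rainbow property is independent of which proper $3$-colouring is selected.
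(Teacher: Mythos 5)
Your proof is correct, but it proves something strictly stronger than the paper does and by a different mechanism. The paper's proof simply exhibits one favourable minimum proper colouring (for even $n$ the bipartite colouring of $C_n$ with every $u_i$ coloured $c_3$; for odd $n$ it points back to the colouring used for the sunlet graph in Proposition \ref{Prop-2.4}) and then asserts that under that colouring all $2n$ vertices yield rainbow neighbourhoods. You instead observe that every vertex of $S^{\bigodot}_n$ has a triangle $\{u_i,v_i,v_{i+1}\}$ inside its closed neighbourhood, that $N[u_i]$ \emph{is} such a triangle, and that under any proper colouring with exactly $\chi(S^{\bigodot}_n)=3$ colours a triangle is forced to exhaust the palette; hence every proper $3$-colouring whatsoever makes all $2n$ vertices yield. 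What this buys is the additional conclusion $r^-_\chi(S^{\bigodot}_n)=r^+_\chi(S^{\bigodot}_n)=2n$, independence from any particular colour allocation, and a cleaner treatment of the odd case (the paper's appeal to Proposition \ref{Prop-2.4} is somewhat loose, since there the $u_i$ are pendant vertices rather than vertices adjacent to two consecutive cycle vertices, so the colouring has to be rechecked anyway). The only ingredient you must supply that the paper gets for free from its explicit colouring is the verification that $\chi(S^{\bigodot}_n)=3$, which your two explicit colourings handle correctly for both parities.
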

\begin{proof}
For $n$ is odd, the result is a direct consequence of the minimum proper colouring required in the proof of Proposition \ref{Prop-2.4}. For $n$ is even, let $c(u_i) = c_3$, $\forall i$. Clearly, all vertices in $V(S^{\bigodot}_n)$ yield a rainbow neighbourhood.
\end{proof}

\section{Conclusion}

Note that for many graphs, $r^-_\chi(G)=r^+_\chi(G)$. Despite this observation, it is of interest to characterise those graphs for which $r^-_\chi(G) \neq r^+_\chi(G)$. From Proposition \ref{Prop-2.3}, it follows that for certain families of graphs such as odd cycles, the value $r^+_\chi(C_n)$ can be infinitely large whilst $r^-_\chi(C_n)=3$.

\begin{problem}{\rm 
An efficient algorithm for the allocation of colours in accordance with a minimum proper colouring to obtain $r^+_\chi(G)$ is  not known.
}\end{problem}

Recalling that the clique number $\omega(G)$ is the order of the largest maximal clique in $G$, the next corollary follows immediately from Theorem 1.1.

\begin{corollary}
Any graph $G$ of order $n$ has, $\omega(G) \leq r^-_\chi(G)$.
\end{corollary}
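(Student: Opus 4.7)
The plan is to chain two standard inequalities. The first is the well-known bound $\omega(G) \leq \chi(G)$, which holds because the vertices of a maximum clique in $G$ are pairwise adjacent and therefore must receive pairwise distinct colours in any proper colouring, forcing the chromatic number to be at least the clique number. The second ingredient is precisely Theorem \ref{Thm-1.1}, which asserts $\chi(G) \leq r_\chi(G)$ for any graph $G$.

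Now I would reconcile the notation: under the rainbow neighbourhood convention, the quantity called $r_\chi(G)$ in Theorem \ref{Thm-1.1} is exactly what is henceforth denoted $r^-_\chi(G)$ (this re-labelling is made explicit in subsection 2.1, where $r^-_\chi(G) = \min\{r_\chi(G):\text{over all permissible colour allocations}\}$). Since Theorem \ref{Thm-1.1} was proved for the minimum proper colouring with the rainbow neighbourhood convention, its conclusion applies verbatim to $r^-_\chi(G)$, giving $\chi(G) \leq r^-_\chi(G)$.

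Combining the two bounds yields the chain
\[
\omega(G) \;\leq\; \chi(G) \;\leq\; r^-_\chi(G),
\]
which is the required inequality. There is no real obstacle here; the corollary is a one-line consequence once the notational identification $r_\chi(G) = r^-_\chi(G)$ (in the sense of Theorem \ref{Thm-1.1}) is made. The only point worth flagging in the write-up is that the bound is often strict, since graphs with $\chi(G) > \omega(G)$ (for instance odd cycles of length at least $5$) will already witness a gap before the further step to $r^-_\chi(G)$ is taken.
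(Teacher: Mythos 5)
Your proposal is correct and matches the paper exactly: the paper states that the corollary ``follows immediately from Theorem \ref{Thm-1.1},'' i.e.\ it chains the standard bound $\omega(G) \leq \chi(G)$ with $\chi(G) \leq r_\chi(G)$ and identifies $r_\chi(G)$ there with $r^-_\chi(G)$. Your additional remark on strictness for graphs with $\chi(G) > \omega(G)$ is a sensible observation but not needed for the result.
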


\begin{problem}{\rm 
For weakly perfect graphs (as well as for perfect graphs) it is known that $\omega(G) = \chi(G)$. If weakly perfect graphs for which $\chi(G) = r^-_\chi(G)$ can be characterised, the powerful result, $\omega(G) = r^-_\chi(G)$ will be concluded.
}\end{problem}
 
\begin{problem}{\rm 
An efficient algorithm to find a minimum proper colouring in accordance with the rainbow neighbourhood convention has not been found yet.
}\end{problem}

The next lemma may assist in a new direction of research in respect of the relation between degree sequence of a graph $G$ and, $r^-_\chi(G)$ and $r^+_\chi(G)$.

\begin{lemma}\label{Lem-3.2}
If a vertex $v \in V(G)$ yields a rainbow neighbourhood in $G$ then $d_G(v) \geq \chi(G) -1$.
\end{lemma}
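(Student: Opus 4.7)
The plan is to give a direct counting argument using the definition of a rainbow neighbourhood and the pigeonhole principle. Fix a $\chi^-$-colouring $\C = \{c_1, c_2, \ldots, c_{\chi(G)}\}$ of $G$ and suppose $v \in V(G)$ yields a rainbow neighbourhood. By definition, $N[v]$ must contain at least one vertex from each of the $\chi(G)$ colour classes.

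Next, I would split $N[v] = \{v\} \cup N(v)$ and note that $v$ itself contributes exactly one colour, namely $c(v)$. Hence the remaining $\chi(G) - 1$ colour classes must each be represented by at least one vertex from $N(v)$. Because any single vertex is assigned exactly one colour under a proper colouring, representing $\chi(G) - 1$ distinct colours requires at least $\chi(G) - 1$ distinct vertices in $N(v)$. This yields
\[
d_G(v) = |N(v)| \geq \chi(G) - 1,
\]
as claimed.

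There is essentially no obstacle here; the statement is an immediate consequence of the definitions combined with the pigeonhole principle. The only small subtlety worth writing explicitly is the observation that the bound is independent of whether one works with $r^-_\chi(G)$ or $r^+_\chi(G)$, since the argument only uses that the ambient colouring is a proper $\chi$-colouring, and thus the lemma applies uniformly to every vertex yielding a rainbow neighbourhood under any minimum proper colouring.
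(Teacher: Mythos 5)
Your argument is correct and is precisely the ``straightforward'' proof the paper omits: the $\chi(G)-1$ colours other than $c(v)$ must each appear on a distinct neighbour of $v$, forcing $d_G(v)\geq \chi(G)-1$. Nothing further is needed.
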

\begin{proof}
The proof is straight forward.
\end{proof}

\noi Lemma \ref{Lem-3.2} motivates the next probability corollary.

\begin{corollary}
A vertex $v\in V(G)$ possibly yields a rainbow neighbourhood in $G$ if and only if $d_G(v) \geq \chi(G) -1$.
\end{corollary}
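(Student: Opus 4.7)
The plan is to prove the biconditional by interpreting ``$v$ possibly yields a rainbow neighbourhood'' as the existence of at least one proper $\chi(G)$-colouring (with rainbow neighbourhood convention) under which $v$'s closed neighbourhood meets every colour class. Each direction then corresponds to a separate argument: necessity is a direct invocation of Lemma~\ref{Lem-3.2}, while sufficiency requires actually producing such a colouring.

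For the necessity direction, the argument is immediate from Lemma~\ref{Lem-3.2}: if some proper $\chi(G)$-colouring makes $v$ yield a rainbow neighbourhood, then $N[v]$ hosts a representative of each of the $\chi(G)$ colour classes, forcing $|N[v]| \geq \chi(G)$ and hence $d_G(v) \geq \chi(G)-1$. The contrapositive is the cleanest way to phrase it: if $d_G(v) < \chi(G)-1$, then $N[v]$ is simply too small to contain one vertex of every colour class, regardless of the colouring chosen.

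For the sufficiency direction, I would fix any $\chi^-$-colouring $c$ of $G$. If $v$ already yields under $c$, we are done; otherwise some colour $c_k$ is absent from $N[v]$, and since the $d_G(v) \geq \chi(G)-1$ neighbours of $v$ use only the at most $\chi(G)-2$ colours other than $c(v)$ and $c_k$, pigeonhole forces some colour $c_j$ to be repeated in $N(v)$. The plan is then to perform a Kempe-type swap on the pair $\{c_j, c_k\}$: take a $c_j c_k$-alternating component containing a duplicate $c_j$-neighbour of $v$, interchange the two colours on that component, and observe that the resulting colouring is still proper, still uses $\chi(G)$ colours, and now includes $c_k$ in $N[v]$ while leaving every previously-present colour of $N[v]$ intact. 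Iterating over each missing colour in turn produces a proper $\chi(G)$-colouring under which $v$ yields a rainbow neighbourhood.

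The main obstacle is the Kempe-chain step: one must select the alternating component so that the swap genuinely introduces $c_k$ into $N[v]$ without simultaneously displacing another colour that had only a single representative there. The tightest boundary case is $d_G(v) = \chi(G) - 1$, where every colour except $c_k$ appears exactly once (after pigeonhole is applied to the duplicate) so the component must be chosen to contain precisely one of the duplicate $c_j$-neighbours. A standard alternating-path analysis handles this, and the rest of the argument is routine book-keeping.
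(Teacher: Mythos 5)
Your necessity direction is fine and matches the content of Lemma~\ref{Lem-3.2}: if $N[v]$ meets all $\chi(G)$ colour classes then $|N[v]|\geq\chi(G)$, so $d_G(v)\geq\chi(G)-1$. The paper itself offers nothing more than this; the corollary is explicitly presented only as being ``motivated'' by Lemma~\ref{Lem-3.2}, with the word ``possibly'' left informal. The trouble is that you commit to the natural rigorous reading --- ``there exists a minimum proper colouring under which $v$ yields'' --- and then the sufficiency direction you set out to prove is simply false, so no amount of Kempe-chain bookkeeping can rescue it.

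Concretely, take $V(G)=\{p,q,x,y,v\}$ with edges $pq$, $px$, $qx$, $py$, $qy$, $vx$, $vy$. The triangles $pqx$ and $pqy$ give $\chi(G)=3$, and in \emph{every} proper $3$-colouring both $x$ and $y$ must receive the unique colour outside $\{c(p),c(q)\}$, so $c(x)=c(y)$ always. Hence $N[v]=\{v,x,y\}$ sees at most two colours under every minimum proper colouring, and $v$ can never yield a rainbow neighbourhood, even though $d_G(v)=2=\chi(G)-1$. This also shows exactly where your Kempe step breaks: starting from $c(p)=c_1$, $c(q)=c_2$, $c(x)=c(y)=c_3$, $c(v)=c_1$, the $\{c_2,c_3\}$-component containing $x$ is the path $x,q,y$, so it necessarily contains \emph{both} duplicate $c_3$-neighbours of $v$; the swap recolours $x$ and $y$ identically and $N[v]$ remains two-coloured. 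The caveat you flag --- that ``the component must be chosen to contain precisely one of the duplicate $c_j$-neighbours'' --- is not a routine technicality but an obstruction that can be unavoidable. The honest conclusion is that only the forward implication (degree bound as a necessary condition) is a theorem; the converse should be read, as the paper implicitly does, as a heuristic about candidacy rather than a provable equivalence.
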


\noi These few problems indicate there exists a wide scope for further research.

\end{document}